\newcommand\N{\mathbb N}
\newcommand\R{\mathbb R}
\DeclareMathOperator\Hess{Hess}
\DeclareMathOperator\conv{conv}
\DeclareMathOperator\inte{int}
\newcommand\ph\varphi
\newcommand\ps\psi
\newcommand\ep\varepsilon
\newcommand\rh\varrho
\newcommand\al\alpha
\newcommand\be\beta
\newcommand\ga\gamma
\newcommand\om\omega
\newcommand\ta\tau
\renewcommand\th\vartheta
\newcommand\de\delta
\newcommand\ze\zeta
\newcommand\ch\chi
\newcommand\et\eta
\newcommand\io\iota
\newcommand\la\lambda
\newcommand\si\sigma
\newcommand\ka\kappa
\newcommand\Ga\Gamma
\newcommand\De\Delta
\newcommand\Th\Theta
\newcommand\La\Lambda
\newcommand\Si\Sigma
\newcommand\Ph\Phi
\newcommand\Ps\Psi
\newcommand\Om\Omega
\newcommand\ov\overline
\theoremstyle{definition}
\newtheorem{satz}{Satz}[section]
\newtheorem{theo}[satz]{Theorem}
\newtheorem{cor}[satz]{Corollary}
\newtheorem{cora}[satz]{Main theorem}
\newtheorem{rem}[satz]{Remark}
\newtheorem{lemma}[satz]{Lemma}
\newtheorem{comi}[satz]{Why are the requirements of the main theorem necessary?}
\newtheorem{comi2}[satz]{What is the purpose of the main theorem (Part 1)?}
\newtheorem{comi3}[satz]{What is the purpose of the main theorem (Part 2)?}
\newtheorem{proposition und beispiel}[satz]{Proposition und Beispiel}
\newtheorem{definition}[satz]{Definition}
\newtheorem{notation}[satz]{Notation}
\newcommand\blfootnote[1]{%
  \begingroup
  \renewcommand\thefootnote{}\footnote{#1}%
  \addtocounter{footnote}{-1}%
  \endgroup
}
\begin{document}
\allowdisplaybreaks
\pagenumbering{arabic}

\title{A new proof for the existence of degree bounds for Putinar's Positivstellensatz}
\author{Tom-Lukas Kriel}
\begin{abstract}
Putinar's Positivstellensatz is a central theorem in real algebraic geometry. It states the following: If you have a set $S= \{ x \in \R^n \ | \ g_1 (x) \geq 0, ... , g_m(x) \geq 0\}$ described by some real polynomials $g_i$, then every real polynomial $f$ that is positive on $S$ can be written as a sum of squares weighted by the $g_i$ and $1$. Consider such an identity $f= \sum_{i=1}^{m} g_i s_i + s_0$. For the applications in polynomial optimization, especially semidefinite programming, the following is important: \\
There exists a bound $N$ for the degrees of the $s_i$ which depends only on the $g_i$, $n$, the degree of $f$, an upper bound for $||f||$ and a lower bound for $\min f(S)$. \\
Two proofs from Prestel and Heß resp. Schweighofer and Nie ([Pr], [He] resp. [Sw], [NS]) for the existence of these degree bounds are known (also for the matrix version of Putinar's Positivstellensatz by Helton and Nie [HN]). Prestel uses valuation and model theory for his approach while Schweighofer gives a constructive solution by using a theorem of P\'{o}lya. \\
In this paper we will give a new elementary, short but non-constructive proof.     
\end{abstract}

\maketitle

\section{Introduction, definitions and historical outline of the problem}

\blfootnote{ 2010 \textit{Mathematics subject classification}: 11E25, 14P10 
\\ \textit{Key words:} Putinar's Positivstellensatz, quadratic module, degree bounds}

\begin{notation} \label{genf}
In general we agree that always $n \in \N=\{1,2,...\}$ and $\ov X=(X_1,...,X_n)^T$ is a tuple of variables. For a commutative ring $R$ with unit $1$ we define the polynomial ring $R[\ov X]$ in $n$ variables and denote by $\deg$ the (total) degree. We will write $R[\ov X]_d:=\{ f \in R[\ov X] \ | \ \deg(f) \leq d \}$ for the set of polynomials up to degree $d$. We extend this notation to matrices with polynomial entries as well by taking the maximum of the degrees of all entries. \\[0.2cm] 
If $m \in \N$ and $g_1,...,g_m \in R[X]$, we write $g=(g_1,...,g_m)$ in shorthand if there is no danger of ambiguity. We define 
\begin{flalign*}
&S_g=\{x \in \R^n \ | \ \forall i \in \{1,...,m\}: g_i(x) \geq 0\} \ \ (\text{semialgebraic set generated by $g$})\\
&P_g=\{f \in \R[\ov X] \ | \ f(S_g) \subseteq \R_{\geq 0} \}. \ \ (\text{positivity cone of $g$}) 
\end{flalign*}
We introduce the following abbreviations, where $A$ is a ring and $C,D \subseteq A$:
\begin{flalign*}
&C+D=\{c+d \ | \ c \in C, \ d \in D\}
\end{flalign*} 
If $a \in A^k$ and $\al \in \N_0^k$ set $a^{\al}=a_1^{\al_1}\cdot...\cdot a_k^{\al_k}$.
We denote the set of symmetric matrices with $S\R^{k \times k}$ and the set of positive semidefinite matrices by $S\R_{\geq 0}^{k \times k}$ (a matrix $A \in \R^{k \times k}$ is said to be positive semidefinite if it is symmetric and $v^T A v \geq 0$ for all $v \in \R^k$. For $A,B \in S\R^{k \times k}$ we write $A \succ B$ if $A-B$ is positive definite.  \\[0.2cm] 
We often will treat finite-dimensional $\R$-vector spaces as Banach spaces. More precisely, we can introduce a norm by giving an isomorphism to a power of $\R^k$ and pulling back a standard norm. Since all norms on $\R^k$ are equivalent we get a unique topology.
\end{notation}

\begin{definition} \label{quadratic}
Let $k \in \ N$, $A=\R[\ov X]^{k \times k}$ and $g_1,...,g_m \in \R[\ov X]$. We define the \textit{quadratic module} $M_g$ generated by the $g_i$ (in $A$) as
\begin{align*}
&M_g= \{ S_0 + g_1S_1 + ... +g_mS_m \ | \ S_0,...,S_m \in \text{SOS}(A)\} \ \ \text{, where} \\
&\text{SOS}(A)=\left\{C^T C \ \middle| \ h \in \N_0, \ C \in \R[\ov X]^{h \times k} \right\} \ \ \ \ (\text{\textit{sums of squares}})
\end{align*}
Notice that in the case $k=1$ the set $\text{SOS}(A)$ consists of sums of squares of elements of $A$. Obviously, if one evaluates a sum of squares in a point, the result is non-negative. For $k>2$ squares of quadratic matrices of size $k$ do not share a similar property. However an element of $\text{SOS}(A)$ evaluated in a point is positive semidefinite. Therefore the name \textit{sums of squares} is a bit misleading but standard in real algebraic geometry. \\[0.2cm]
Quadratic modules can be defined in a far more general setting, which we do not need to do for our purposes. Quadratic modules are closed under addition and multiplication with sums of squares. If they are also closed under multiplication, they are called \textit{preorderings}. The preordering defined by the $g_i$ (in $A$) will be denoted by $T_g$ and we have:
\begin{align*}
T_g=M_{(g^\al \ | \ \al \in \{0,1\}^m)}=\left\{ \sum_{\al \in \{0,1\}^m} g^\al s_{\al} \ \middle| \ s_\al \in \sum A^2 \right\}
\end{align*} 
For $N \in \N$ we define the \textit{truncated quadratic module} (resp. \textit{preordering}): 
\begin{align*}
&M_{g,A}[N]= \{ S_0 + g_1S_1 + ... +g_mS_m \ | \ S_0,...,S_m \in \text{SOS}(A), \ \deg(S_0) \leq N, \deg(S_i g_i) \leq N \} \\
&T_{g,A}[N]=\left\{ \sum_{\al \in \{0,1\}^m}  g^\al S_\alpha \ \middle| \ S_\al \in \text{SOS}(A), \ \deg(S_{\al} g^\al) \leq N \text{ for all } \al \in  \{0,1\}^m\right\}
\end{align*}
$M_g$ is called \textit{Archimedean} if for every $a \in A$ there exists $N \in \N$ such that $a+N \in M_g$. It is a well-known fact that $M_g$ is Archimedean iff there exists $N \in \N$ such that $N-\sum_{i=1}^{n}X_i^2 \in M_g$ ([Ma], 5.2.4).
\end{definition}

\begin{rem}
It is easy to see that $M_g[N] \subseteq T_g[N] \subseteq P_g$ and $M_g=\bigcup_{N \in \N} M_g[N], \ T_g=\bigcup_{N \in \N} T_g[N] $ for $N \in \N$. \\[0.2cm]
Much effort in real algebraic geometric has been invested to examine how small the difference between $P_g$ and $M_g$ (resp. $T_g$) is. This is due to the fact that one wants to check if a polynomial $f$ is contained $P_g$ e.g. for solving polynomial optimization problems. However this is difficult to verify with a computer because it is difficult to determine how $S_g$ looks like. Contrary to that, it is possible to check whether $f \in M_g[N]$ in most cases with a computer (see [Lau], Section 3.3). Also for theoretical aspects this connection is quite important. \\[0.2cm] 
The following two theorems are the most important and fundamental statements regarding this question. 
\end{rem}

\begin{theo} \label{putinar} (Putinar's Positivstellensatz) ([Pu], 1993) Let $g_1,...,g_m \in \R[\ov X]$ be polynomials defining an Archimedean module $M_g$. Then we have $f \in M_g$ for every $f \in \R[\ov X]$ satisfying $f > 0$ on $S_g$. 
\end{theo}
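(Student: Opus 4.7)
The plan is to argue by contradiction and manufacture a probability measure on $S_g$ that would force $f$ to take a non-positive value there, contradicting $f > 0$ on $S_g$. Assume $f \notin M_g$. The Archimedean hypothesis says that for every $a \in \R[\ov X]$ there is an $N$ with $N \pm a \in M_g$, so $1$ is an algebraic interior point of $M_g$ in each finite-dimensional slice $\R[\ov X]_d$. An Eidelheit/Hahn-Banach separation therefore yields a linear functional $L: \R[\ov X] \to \R$ with $L \geq 0$ on $M_g$, $L(f) \leq 0$, and, after normalisation, $L(1) = 1$.

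Non-negativity on $\SOS$ turns $(h, k) \mapsto L(hk)$ into a positive semidefinite bilinear form, giving the Cauchy-Schwarz inequality $L(hk)^2 \leq L(h^2) L(k^2)$. Combined with the Archimedean witness $N - \sum_{i=1}^n X_i^2 \in M_g$, this yields uniform estimates $L(X^\al) \leq c^{|\al|}$ on every monomial moment of $L$.

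The next step is a GNS-style construction: quotient $\R[\ov X]$ by the ideal $\{h : L(h^2) = 0\}$, complete with respect to the inner product induced by $L$, and observe that multiplication by $X_i$ descends to a bounded self-adjoint operator $\hat X_i$ on the resulting Hilbert space (boundedness comes from the moment estimates above). These operators pairwise commute, so the joint spectral theorem supplies a spectral resolution on $\R^n$; pushing it forward by the cyclic vector represented by $1$ produces a probability measure $\mu$ with $L(p) = \int p\, d\mu$ for every $p \in \R[\ov X]$. Since $g_i h^2 \in M_g$ forces $L(g_i h^2) \geq 0$ for all $h$, the measure $g_i\, d\mu$ is positive, hence $g_i \geq 0$ holds $\mu$-a.e., so $\operatorname{supp}\mu \subseteq S_g$. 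Then $L(f) = \int_{S_g} f\, d\mu > 0$ by strict positivity on the support, contradicting $L(f) \leq 0$.

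The principal obstacle is the functional-analytic middle step: producing the representing measure $\mu$. The separation and the Cauchy-Schwarz estimates are routine, but exhibiting $\mu$ bundles the GNS construction with the spectral theorem for a commuting family of bounded self-adjoint operators, and one has to verify the ideal property of $\{h : L(h^2) = 0\}$ and the cyclicity of $1$. A leaner alternative would be to invoke Haviland's theorem directly, but then one must show that $L$ is non-negative on every polynomial non-negative on $S_g$, which is essentially the conclusion, so the Archimedean property must do the heavy lifting there in an equivalent form. The author's promise of a \emph{new elementary, short but non-constructive} proof suggests that the challenge is precisely to replace this spectral-theoretic machinery by something softer.
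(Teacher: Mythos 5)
The paper does not prove this theorem. It is cited from [Pu] and used as a black box: the proof of \Cref{putinarmatrixbound} invokes \Cref{putinarmatrix} (the matrix version, likewise imported from [HS]) to supply the inclusion $U \subseteq \inte(M_{g,R} \cap H)$, after which the entire novel content is the short topological \Cref{u1}. The remark following the main theorem says this explicitly: unlike Prestel and Schweighofer, the author takes the Positivstellensatz \emph{without} degree bounds as a starting point rather than reproving it. Your closing paragraph therefore reads the paper backwards --- the promised ``new elementary, short but non-constructive'' proof is of the degree-bound theorem, not of Putinar's Positivstellensatz itself, and the challenge the author sets is not to soften the spectral-theoretic machinery behind \Cref{putinar} but to avoid Prestel's model theory and Schweighofer's P\'olya-based bookkeeping in the passage from the Positivstellensatz to uniform degree bounds.

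On its own terms your sketch is a correct outline of the classical moment-theoretic proof: Eidelheit separation works because Archimedeanity makes $1$ an algebraic interior point of the convex cone $M_g$; Cauchy--Schwarz together with the witness $N - \sum_{i=1}^n X_i^2 \in M_g$ controls moments; GNS plus the joint spectral theorem for the commuting bounded self-adjoint $\hat X_i$ yields a representing measure, and $L(g_i h^2) \geq 0$ forces its support into $S_g$. One small inaccuracy: boundedness of $\hat X_i$ should be read off from the quadratic-form estimate $L(X_i^2 h^2) \leq N\, L(h^2)$, which follows because $(N - X_i^2) h^2 \in M_g$, rather than from the scalar monomial bounds $L(X^{\al}) \leq c^{|\al|}$, which by themselves do not control the operator norm. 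But since the paper supplies no proof of \Cref{putinar} at all, there is nothing to compare against; the statement you would need to attack to engage with the paper's actual contribution is \Cref{u1} (or the way it is applied in \Cref{putinarmatrixbound}).
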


\noindent Putinars Positivstellensatz was discovered shortly after Schmüdgens Positivstellensatz which is a similar statement having a weaker hypothesis and a weaker conclusion:

\begin{theo} \label{schmudgen} (Schmüdgen's Positivstellensatz) ([Sm], 1991) Let $g_1,...,g_m \in \R[\ov X]$ be polynomials defining a compact set $S_g$. Then we have $f \in T_g$ for every $f \in \R[\ov X]$ satisfying $f > 0$ on $S_g$. 
\end{theo}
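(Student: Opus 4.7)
The plan is to derive Schmüdgen's theorem from Putinar's theorem (\Cref{putinar}), which is already available. The key observation is that $T_g$ itself is a quadratic module, generated by the $2^m$ products $g^\alpha$ with $\alpha \in \{0,1\}^m$, and the enlarged family defines the same semialgebraic set $S_{(g^\alpha)} = S_g$. Thus, if I can show that the quadratic module $T_g$ is Archimedean, then applying \Cref{putinar} to the generators $(g^\alpha)_\alpha$ immediately yields $f \in M_{(g^\alpha)} = T_g$ for every $f > 0$ on $S_g$. The whole task therefore reduces to verifying the Archimedean property of $T_g$.

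By the characterization cited after \Cref{quadratic} (Marshall 5.2.4), Archimedeanness is equivalent to producing $N \in \N$ with $N - \sum_{i=1}^n X_i^2 \in T_g$. Compactness of $S_g$ guarantees $R > 0$ such that the polynomial $h := R - \sum X_i^2$ is strictly positive on $S_g$, but this by itself says nothing about membership in $T_g$. Bridging this gap between positivity on $S_g$ and an algebraic representation in $T_g$ is the main obstacle and is essentially the content of Schmüdgen's 1991 result.

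To clear this obstacle I would invoke the abstract Krivine-Stengle Positivstellensatz, which is proved independently of \Cref{schmudgen} via the real spectrum and the Tarski-Seidenberg transfer principle. Applied to $h > 0$ on $S_g$, it produces $p, q \in T_g$ with $p h = 1 + q$. The final delicate step is to eliminate the denominator $p$. Here I would use Wörmann's trick: the set $B$ of \emph{bounded elements} of the preordering $T_g$, i.e.\ those $a \in \R[\ov X]$ for which some $C \in \N$ satisfies $C - a^2 \in T_g$, is a subring of $\R[\ov X]$ (the subring property is where closure of $T_g$ under products — i.e.\ the preordering hypothesis, unavailable for a general quadratic module — enters crucially). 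From $ph = 1+q$ one sees that $p$ is bounded below by a positive constant on $S_g$, and a short algebraic manipulation exploiting the identity together with the boundedness of $1/h$ on $S_g$ then places each coordinate $X_i$ into $B$. Summing yields $N - \sum X_i^2 \in T_g$, and \Cref{putinar} applied to $(g^\alpha)_\alpha$ concludes the proof.

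The hard part is thus not the reduction to Putinar, which is formal, but the denominator cleanup via Wörmann's subring argument fed by the abstract Positivstellensatz; this is precisely the point where the preordering hypothesis pays off over the (a priori weaker) quadratic module setting of Putinar.
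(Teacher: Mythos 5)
The paper gives no proof of this theorem: it cites [Sm], points to [Ma] for a presentation, and remarks after \Cref{schmudgenmatrix} that the hard part of Schm\"udgen's argument was showing that $T_g$ is Archimedean precisely when $S_g$ is compact. Your proposal reconstructs exactly that route, and it is the standard modern proof found in [Ma]: apply the Krivine--Stengle Positivstellensatz to $h = R - \sum X_i^2 > 0$ on $S_g$ to get $p,q \in T_g$ with $ph = 1+q$; use W\"ormann's observation that the bounded elements of a \emph{preordering} form a subring to eliminate the denominator $p$ and obtain $N - \sum X_i^2 \in T_g$; then feed the Archimedean preordering into \Cref{putinar} applied to the generators $(g^\al)_{\al \in \{0,1\}^m}$, noting $M_{(g^\al)} = T_g$ and $S_{(g^\al)} = S_g$. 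So you and the paper are pointing at the same argument. Two caveats worth recording. First, the reduction to Putinar is logically sound here because \Cref{putinar} is established in the cited literature via the abstract representation theorem for Archimedean quadratic modules, independently of Schm\"udgen --- though historically Schm\"udgen (1991) predates Putinar (1993) and relied on operator-theoretic moment methods. Second, the phrase ``a short algebraic manipulation \dots places each coordinate $X_i$ into $B$'' is carrying almost the entire weight of the theorem: going from $ph = 1+q$ to $\sum X_i^2 \in B$ is W\"ormann's induction and genuinely uses closure of $T_g$ under products, which is unavailable for a general quadratic module. If you wrote this out you would need to actually perform that induction rather than treat it as a one-liner; everything else in your outline is routine bookkeeping.
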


\noindent See also Marshalls book [Ma] for a good presentation of the proofs. One can generalize these theorem to matrices in order to get the following statements:

\begin{theo} \label{putinarmatrix} (Putinar's Positivstellensatz for matrix polynomials) ([HS], 2006) Let $g_1,...,g_m \in \R[\ov X]$ be polynomials defining an Archimedean module $M_{g,\R[\ov X]}$ and $k \in \N$. Then we have $A \in M_{g,\R[\ov X]^{k \times k}}$ for every $A \in S\R[\ov X]^{k \times k}$ satisfying $A \succ 0$ on $S_g$. 
\end{theo}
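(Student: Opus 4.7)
I would reduce the matrix version to the scalar Putinar Positivstellensatz (Theorem~\ref{putinar}) by introducing an auxiliary tuple of variables $\ov Y = (Y_1,\dots,Y_k)$ and considering the scalar polynomial $p(\ov X, \ov Y) := \ov Y^T A(\ov X) \ov Y \in \R[\ov X, \ov Y]$. Since $A \succ 0$ on $S_g$, we have $p(x,y) > 0$ whenever $x \in S_g$ and $y \neq 0$, so $p$ is strictly positive on the compact set $S_g \times \S^{k-1}$.

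Let $M'$ be the quadratic module in $\R[\ov X, \ov Y]$ generated by $g_1, \dots, g_m$ together with the two polynomials $1-\|\ov Y\|^2$ and $\|\ov Y\|^2-1$, which together encode the equality $\|\ov Y\|^2 = 1$. Since $M_g$ is Archimedean and the new generators bound $\|\ov Y\|^2$, the module $M'$ is Archimedean as well, with positivity set $S_g \times \S^{k-1}$. Theorem~\ref{putinar} applied to $p$ and $M'$ therefore yields a representation
\[
\ov Y^T A(\ov X) \ov Y \;=\; \sigma_0 \;+\; \sum_{i=1}^m g_i \sigma_i \;+\; h\,(1-\|\ov Y\|^2)
\]
with each $\sigma_i \in \sum\R[\ov X, \ov Y]^2$ and some $h \in \R[\ov X, \ov Y]$.

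The key structural observation I would use to extract a matrix decomposition from this identity is the following lemma: any SOS polynomial $\sigma \in \R[\ov X, \ov Y]$ that is homogeneous of degree $2$ in $\ov Y$ must be of the form $\sigma = \ov Y^T S(\ov X) \ov Y$ for some $S \in \mathrm{SOS}(\R[\ov X]^{k\times k})$. To prove this one writes $\sigma = \sum_j f_j^2$ and uses a top-$\ov Y$-degree comparison to see that each $f_j$ may be taken to be homogeneous of $\ov Y$-degree $1$, i.e.\ $f_j = \ov Y^T v_j(\ov X)$ for some $v_j \in \R[\ov X]^k$, so that $S := \sum_j v_j v_j^T$ does the job. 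To bring the Putinar identity into a form where this lemma applies, I would first symmetrize under the substitution $\ov Y \mapsto -\ov Y$, so that each $\sigma_i$ and $h$ becomes even in $\ov Y$, and then exploit the congruence $\|\ov Y\|^2 \equiv 1 \pmod{1 - \|\ov Y\|^2}$ together with the SOS-preserving identity $\|\ov Y\|^2 f^2 = \sum_{i=1}^k (Y_i f)^2$ to modify the $\sigma_i$ so that the degree-$2$-in-$\ov Y$ component of the right-hand side visibly becomes $\ov Y^T S_0(\ov X) \ov Y + \sum_i g_i \ov Y^T S_i(\ov X) \ov Y$ with each $S_i$ an SOS matrix.

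The hard part is to handle the non-SOS cross terms that appear when one expands an SOS $\sigma_i = \sum_j f_j^2$ and takes its $\ov Y$-degree-$2$ component: each summand contributes the well-behaved part $(f_j^{(1)})^2 = \ov Y^T v_j v_j^T \ov Y$, but also the mixed term $2 f_j^{(0)} f_j^{(2)}$, in which $f_j^{(2)}$ is only a symmetric matrix form in $\ov Y$ rather than an SOS one. Showing that, after the symmetrization and homogenization tricks above and with an appropriate choice of the free polynomial $h$ in the Putinar identity, these mixed contributions can always be absorbed into genuine SOS-matrix terms is where I expect the real technical difficulty of the proof to sit.
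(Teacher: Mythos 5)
The paper does not actually prove \Cref{putinarmatrix}; it states the result and cites Hol and Scherer ([HS], 2006). There is therefore no in-text proof to compare against, and your proposal has to be judged as a free-standing argument.

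Your reduction to the scalar Theorem~\ref{putinar} via auxiliary variables $\ov Y$ is a natural idea, and your structural lemma is correct: by a top-$\ov Y$-degree cancellation, an SOS polynomial that is homogeneous of degree $2$ in $\ov Y$ is of the form $\ov Y^T S(\ov X) \ov Y$ with $S = \sum_j v_j v_j^T$ an SOS matrix. The problem is the step you yourself flag as the ``real technical difficulty'': as written, it is a genuine gap and not a routine one. The $\ov Y$-degree-$2$ component of a scalar SOS polynomial need not be of the form $\ov Y^T S \ov Y$ with $S$ SOS — it can even be negative definite. For instance $\sigma = (1 - Y_1^2)^2$ is a square, yet its degree-$2$ component is $-2Y_1^2$. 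The symmetrization $\ov Y \mapsto -\ov Y$ does not help here (this $\sigma$ is already even), and while the relation $\|\ov Y\|^2 \equiv 1$ lets you move mass between $\ov Y$-degrees, nothing in your outline shows that the freedom in the multiplier $h$ forces the problematic mixed contributions $2 f_j^{(0)} f_j^{(2)}$ to be reabsorbed into genuine SOS-matrix terms. You correctly locate the obstruction, but the argument stops exactly where it would have to begin doing real work: one would need a lemma asserting that, modulo the ideal $(\|\ov Y\|^2-1)$, every even-in-$\ov Y$ SOS admits a representative of the form $c + \ov Y^T S \ov Y$ with $c \in \sum\R[\ov X]^2$ and $S$ an SOS matrix, and no such normalization is provided or obviously available. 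The published proofs of the matrix Putinar Positivstellensatz ([HS]; see also the treatments of Klep--Schweighofer and Cimpri\v{c}) sidestep this extraction problem entirely by arguing directly with the matrix quadratic module $M_{g,\R[\ov X]^{k\times k}}$, via a Hahn--Banach separation / GNS-type construction that produces a point of $S_g$ at which $A$ fails to be positive definite whenever $A \notin M_{g,\R[\ov X]^{k\times k}}$; this avoids ever passing through a scalar identity in extra variables.
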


\begin{theo} \label{schmudgenmatrix} (Schmüdgen's Positivstellensatz for matrix polynomials) ([He], 2013) Let $g_1,...,g_m \in \R[\ov X]$ be polynomials defining a compact set $S_g$ and $k \in \N$. Then we have $A \in T_{g,\R[\ov X]^{k \times k}}$ for every $A \in S\R[\ov X]^{k \times k}$ satisfying $A \succ 0$ on $S_g$. 
\end{theo}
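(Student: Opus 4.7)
The plan is to reduce Theorem \ref{schmudgenmatrix} to the matrix Putinar Theorem \ref{putinarmatrix} by enlarging the tuple of generators so that the quadratic module they generate coincides with the preordering $T_g$ and, crucially, becomes Archimedean. Write $\widetilde g = (g^\alpha)_{\alpha \in \{0,1\}^m}$ for the $2^m$-tuple of all products of the $g_i$. By the formula displayed in Definition \ref{quadratic} we have, for any $k$,
\[
T_{g,\R[\ov X]^{k\times k}} \;=\; M_{\widetilde g,\,\R[\ov X]^{k\times k}},
\]
and since each $g_i$ occurs among the $g^\alpha$, the semialgebraic sets coincide: $S_{\widetilde g} = S_g$.

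The first step is to verify that the scalar module $M_{\widetilde g,\R[\ov X]}=T_g$ is Archimedean, using the compactness of $S_g$. Since $S_g$ is compact, there is some $N\in\N$ with $N-\sum_i X_i^2 > 0$ on $S_g$; by the scalar Schm\"udgen Positivstellensatz (Theorem \ref{schmudgen}) this polynomial lies in $T_g = M_{\widetilde g,\R[\ov X]}$, so by the criterion cited at the end of Definition \ref{quadratic} the module $M_{\widetilde g,\R[\ov X]}$ is Archimedean. (This is the classical W\"ormann argument.)

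The second step is a direct application of the matrix Putinar theorem. The matrix polynomial $A\in S\R[\ov X]^{k\times k}$ satisfies $A\succ 0$ on $S_{\widetilde g}=S_g$, and the scalar module generated by $\widetilde g$ is Archimedean, so Theorem \ref{putinarmatrix} yields
\[
A \in M_{\widetilde g,\,\R[\ov X]^{k\times k}} \;=\; T_{g,\R[\ov X]^{k\times k}},
\]
which is the desired conclusion.

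The only real content of the argument is the Archimedean-ness of $T_g$, and the key observation is that this can be bootstrapped from the scalar Schm\"udgen Positivstellensatz rather than from the matrix one; the matrix case then follows immediately from the matrix Putinar theorem applied to the enlarged tuple. There is no genuine obstacle; the mild subtlety is merely bookkeeping to confirm that the formula $T_{g,A}=M_{(g^\alpha),A}$ indeed transfers verbatim to the matrix ring $A=\R[\ov X]^{k\times k}$, which is immediate from the definition of $\text{SOS}(A)$ since both sides describe the same set of finite linear combinations $\sum_\alpha g^\alpha S_\alpha$ with $S_\alpha$ a matricial sum of squares.
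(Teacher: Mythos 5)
The paper cites this theorem to [He] without giving a proof of its own, so there is no in-paper argument to compare against; but your proof is correct, and it is the natural one. The reduction is sound on all counts: the identity $T_{g,A}=M_{(g^\alpha)_\alpha,A}$ is immediate from the definition of $\text{SOS}(A)$ and preorderings, $S_{\widetilde g}=S_g$ because each $g_i$ occurs among the $g^\alpha$, and the Archimedean-ness of the scalar module $M_{\widetilde g,\R[\ov X]}=T_g$ follows by compactness of $S_g$ together with scalar Schm\"udgen (\Cref{schmudgen}) applied to $N-\sum_i X_i^2$ and the criterion quoted at the end of \Cref{quadratic}. With that hypothesis in hand, \Cref{putinarmatrix} applied to the tuple $\widetilde g$ gives exactly the claimed membership $A\in T_{g,\R[\ov X]^{k\times k}}$.

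Two remarks. First, bootstrapping Archimedean-ness of $T_g$ from scalar Schm\"udgen is legitimate here because \Cref{schmudgen} is taken as a black box, but it is worth being aware that historically the Archimedean property of $T_g$ for compact $S_g$ \emph{is} the heart of Schm\"udgen's theorem, so the argument is a genuine reduction rather than a new proof from scratch. Second, the paper itself endorses precisely this route in the proof of \Cref{schmudgenmatrixbound}, where it notes that ``one can use that $T_g$ is Archimedean and apply \Cref{putinarmatrixbound} directly''; your argument is the Positivstellensatz-level analogue of that remark.
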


\noindent In order to prove his theorem Schmüdgen showed that $T_g$ is Archimedean iff $S_g$ is compact, which was the hardest part. The previous theorems give information when a polynomial or matrix polynomial is in a quadratic module. We are interested in the question if one is able to control the degree of the summands in the weighted sums-of-squares representation. Our aim is to prove the following (which will be done in Chapter 2):\\

\noindent{\bfseries Main Theorem 3.2.} (Putinar's Positivstellensatz for matrix polynomials with degree bounds) Let $k \in \N$ and $g_1,...,g_m \in \R[\ov X]$ be polynomials defining an Archimedean quadratic module $M_{g,\R[\ov X]}$. Then for all $L \in \N$ there exists $N \in \N$ guaranteeing that for every $A \in S\R[\ov X]_L^{k \times k}$ satisfying $||A|| \leq L$ and $A \succeq \frac{1}{L}$ on $S_g$ already $A \in M_{g,\R[\ov X]^{k \times k}}[N]$ holds.

\begin{rem}
There already exist two other proofs for the main theorem. The first proof was given by Prestel ([Pr]); later Schweighofer gave another proof ([Sw]) (their proofs dealt only with the scalar case and with the Schmüdgen-setting instead of the Putinar-setting but their approaches were generalized by [He] resp. [NS] and [HN] to the matrix case in both settings). \\[0.2cm] 
Prestel's strategy was to show a version Schmüdgen's Positivstellensatz holding over arbitrary real closed fields. Then he used the Finitess theorem from model theory to conclude the existence of degree bounds, which is nowadays a common technique. \\[0.2cm] 
Schweighofer gave a constructive proof for Schmüdgen's Positivstellensatz using a theorem of P\'{o}lya where he was able to analyze the degrees appearing. \\[0.2cm] 
Our proof is by far the shortest one and the most elementary. However we use Schmüdgen's Positivstellensatz without degree bounds as a starting point whereas Prestel and Schweighofer did not need to do that and reproved this.  
\end{rem}

\section{The purpose of the main theorem and its requirements}

\noindent In this Section we will argue why some requirements in the main theorem are optimal and what the purpose of this theorem is. Since we do not need this Section for our proof, it is possible to omit it or read it after the proof.

\begin{comi}
Scheiderer showed that $T_g \neq P_g$ if the dimension of $S_g$ as a semialgebraic set is at least 3 (see [Ma], 2.6.2 for a proof) so the previous theorems are not valid anymore if one requires only that $f \geq 0$ on $S_g$ (resp. $A \succeq 0$ on $S_g$). If $T_g \cap -T_g = \emptyset$, (this is fulfilled if $0 \notin g$ and $S_g$ has an interior point) one can show that $T_g[N]$ is closed ([Ma], 4.1.4). Combining this fact with the result of Scheiderer one sees that both theorems do not allow the existence of degree bounds in the following sense: \\[0.2cm] 
If $S_g$ has dimension at least 3, there is an $K \in \N$ such that $\{f \in \R[\ov X]_K \ | \ f \geq 0 \text{ on } S_g \} \nsubseteq T_g[N]$ for all $N \in \N$. So this means that one cannot expect the existence of degree bounds in the representation by just bounding the degree of the polynomials $f$. An explicite example is that for $g=X^3(1-X)$ there exists no $N \in \N$ such that 
\begin{align*}
\{X+\epsilon \ | \ \epsilon > 0\} \subseteq M_g[N]
\end{align*}  
because $X \notin T_g$. $M_g=T_g$ is even Archimedean in this case. \\[0.2cm] 
Therefore our degree bound will also depend on how small our functions become on $S_g$. Of course if we take a function $f$ to represent, then it does not change anything if we multiply it with a positive scalar. So in order to limit the minimum of our functions we will require that we have an upper bound on the norm in $\R[X]_d$ and a lower bound for $\min f(S)$.
\end{comi}

\begin{comi2}
The most important application of the main theorem is to determine whether the so called Lasserre relaxation becomes exact. For some polynomials $g=(g_1,...,g_m) \subseteq \R[\ov X]$ the Lasserre relaxation is a sequence of descending super sets $(\mathcal{L}_{{g,k}})_{k \in \N}$ of $S_g$ which are designed to approximate $S_g$. The sets $\mathcal{L}_{{g,k}}$ are (affine) projections of spectrahedra. A spectrahedron is the preimage of $S\R_{\geq 0}^{k \times k}$ under an affine linear-map $\R^l \rightarrow S\R^{k \times k}$.
Projections of spectrahedra are nice to handle from a computational point of view because the distance between them and hyperplanes can be computated under mild assumptions quite efficiently (as a generalization of linear programming). This task is called \textit{semidefinite programming}. So a semidefinite program is of the form
\begin{align*}
\text{minimize } \ell(x) \text{ where } x \in P
\end{align*}
where $\ell \in \R[\ov X]_1$ and $P$ is a spectrahedron (by introducing additional variables it is also allowed that $P$ is only a projection of a spectrahedron). Many practical optimization problems can be approximated by semidefinite programs. See [Ma] and [GM] for more information about semidefinite programming.\\[0.2cm] 
The Lasserre relaxation introduced in [Las] can be seen as an attempt to find the minimum of a linear polynomial on a basic-closed semialgebraic set $S_g$ by approximating $S_g$ with $\mathcal{L}_{{g,k}}$ and solving the related semidefinite programs afterwards. \\[0.2cm] 
Lasserre showed that if one assumes that $M_g$ is Archimedean and $S_g$ convex, then $(\mathcal{L}_{{g,k}})_{k \in \N}$ converges to $S_g$ in a strong way (for given $\ep > 0$ we find $k \in \N$ such that $\mathcal{L}_{{g,k}} \subseteq S_g + B(0,\ep)$). Although it is not necessary to argue with degree bounds, Lasserre used the \Cref{putinarmatrixbound} for $k=1$ in his proof ([Las], Theorem 6). Of course in order to get an exact approximation one would like to have $\mathcal{L}_{{g,k}}=\conv(S_g)$.
This question is strongly connected to the topic of this paper via the following lemma:    
\end{comi2}

\begin{lemma} \label{lemmi} ([NPS], 3.1)
Let $g=(g_1,...,g_m) \subseteq \R[\ov X]$, $S_g$ be with non-empty interior and $M_g$ Archimedean. Then for $k \in \N$ the following is equivalent:
\begin{flalign*}
&(i) \ \ \conv(S_g)=\mathcal{L}_{{g,k}} \\
&(ii) \ M_g[k] \cap \R[\ov X]_1 = \{ \ell \in \R[\ov X]_1 \ | \ \ell(S_g) \subseteq \R_{\geq 0} \} 
\end{flalign*}
\end{lemma}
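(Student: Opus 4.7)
The proof rests on dualising both sides of the claimed equivalence as polar-type conditions on cones inside $\R[\ov X]_1$. Set $C_k := M_g[k] \cap \R[\ov X]_1$ and $C := \{\ell \in \R[\ov X]_1 \mid \ell(S_g) \subseteq \R_{\geq 0}\}$; the inclusion $C_k \subseteq C$ is automatic. Since $M_g$ is Archimedean, $S_g$ and hence $\conv(S_g)$ are compact, and the standard supporting-hyperplane description of a compact convex set gives
\[
\conv(S_g) \;=\; \{x \in \R^n \mid \ell(x) \geq 0 \text{ for all } \ell \in C\}.
\]
The key ingredient is the analogous description for the Lasserre relaxation:
\[
\mathcal{L}_{g,k} \;=\; \{x \in \R^n \mid \ell(x) \geq 0 \text{ for all } \ell \in C_k\}. \qquad (\ast)
\]

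Identity $(\ast)$ follows from conic duality. Starting from $\mathcal{L}_{g,k}$ written as the degree-$1$ projection of normalised linear functionals $L : \R[\ov X]_k \to \R$ that are non-negative on $M_g[k]$, the inclusion ``$\subseteq$'' in $(\ast)$ is immediate, since for such an $L$ the point $x = (L(X_1), \ldots, L(X_n))$ satisfies $\ell(x) = L(\ell) \geq 0$ for every $\ell \in C_k$. For the reverse inclusion I would extend the functional $\ell' \mapsto \ell'(x)$ from $\R[\ov X]_1$ to a linear functional on $\R[\ov X]_k$ non-negative on $M_g[k]$ by a Hahn--Banach separation argument; this uses that $M_g[k]$ is a closed convex cone in the finite-dimensional vector space $\R[\ov X]_k$, which is guaranteed by the non-empty-interior hypothesis on $S_g$ (cf.\ [Ma], 4.1.4). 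With $(\ast)$ at hand, $(ii)\Rightarrow(i)$ is immediate: the two dual sets $\mathcal{L}_{g,k}$ and $\conv(S_g)$ depend only on $C_k$ and $C$ respectively, so $C_k = C$ forces $\mathcal{L}_{g,k} = \conv(S_g)$.

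For the converse $(i)\Rightarrow(ii)$ I would argue by contraposition. Suppose $\ell \in C \setminus C_k$. As $C_k$ is a closed convex cone in the finite-dimensional space $\R[\ov X]_1$, Hahn--Banach separation provides a linear functional $\Phi$ on $\R[\ov X]_1$ with $\Phi|_{C_k} \geq 0$ and $\Phi(\ell) < 0$. I then need to realise $\Phi$ as an evaluation $\ell' \mapsto \ell'(x)$ at some $x \in \R^n$, which requires $\Phi(1) > 0$ so that one can normalise and set $x_i := \Phi(X_i) / \Phi(1)$. Since $1 \in C_k$ one at least has $\Phi(1) \geq 0$; the degenerate case $\Phi(1) = 0$ can be handled by picking $p \in \inte(S_g)$ and replacing $\Phi$ by $\Phi + \ep\,(\ell' \mapsto \ell'(p))$ for sufficiently small $\ep > 0$, which keeps $\Phi|_{C_k} \geq 0$ and $\Phi(\ell) < 0$ (the latter by continuity) while forcing $\Phi(1) > 0$. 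The resulting point $x$ lies in $\mathcal{L}_{g,k}$ by $(\ast)$, hence in $\conv(S_g)$ by (i), and therefore $\ell(x) \geq 0$, contradicting $\Phi(\ell) < 0$.

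The main obstacle is the polar characterisation $(\ast)$ of $\mathcal{L}_{g,k}$: once this conic duality is in place, the remainder is a routine bipolar argument. The two ingredients it rests on, namely closedness of $M_g[k]$ in $\R[\ov X]_k$ and the Hahn--Banach lift of a non-negative functional from $\R[\ov X]_1$ to $\R[\ov X]_k$, both depend essentially on the non-empty-interior assumption on $S_g$, which explains why this hypothesis is built into the lemma.
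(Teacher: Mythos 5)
The paper itself gives no proof of this lemma (it is imported verbatim from [NPS], 3.1), so your argument has to stand on its own. Your overall strategy -- dualising both sides and comparing the polars of $C_k=M_g[k]\cap\R[\ov X]_1$ and $C$ -- is the right one, and the pieces surrounding it (compactness of $\conv(S_g)$ from Archimedeanity, closedness of $M_g[k]$ from the interior-point hypothesis, the separation in $(i)\Rightarrow(ii)$ with the perturbation by an evaluation at an interior point to force $\Phi(1)>0$) are all sound.

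The genuine gap is the hard inclusion ``$\supseteq$'' of your identity $(\ast)$. You claim that a functional on $\R[\ov X]_1$ which is non-negative on $C_k$ extends to a functional on $\R[\ov X]_k$ non-negative on $M_g[k]$ ``by Hahn--Banach, using that $M_g[k]$ is closed.'' Closedness of $M_g[k]$ does not deliver this. The extension you need exists if and only if the evaluation functional at $x$ lies in the image of the restriction map $M_g[k]^\vee\to(\R[\ov X]_1)^*$; that image is a linear image of a closed cone and need not be closed, and the bipolar theorem (together with closedness of $M_g[k]$) only places your functional in its \emph{closure}. Equivalently, the M.~Riesz extension criterion would require $\R[\ov X]_1+M_g[k]=\R[\ov X]_k$, which fails already for $n=1$, $g=(X,1-X)$, $k=2$: no polynomial $-X^2+\ell$ with $\ell$ linear lies in $M_g[2]$, since the degree-$2$ part of a sum of squares cannot have negative leading coefficient and the $g_i$-terms are forced to be constants. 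Archimedeanity does not rescue this at a fixed truncation level $k$. So what your duality argument actually proves is only the closure version $\ov{\mathcal{L}_{g,k}}=\{x\in\R^n \mid \ell(x)\geq 0 \text{ for all } \ell\in C_k\}$, and $\mathcal{L}_{g,k}$ need not coincide with its closure in general.

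The lemma survives if you run your argument with this weaker identity. In $(i)\Rightarrow(ii)$, hypothesis $(i)$ makes $\mathcal{L}_{g,k}=\conv(S_g)$ compact, hence closed, so your separating point $x$ lands in $\ov{\mathcal{L}_{g,k}}=\mathcal{L}_{g,k}=\conv(S_g)$ and the contradiction goes through unchanged. In $(ii)\Rightarrow(i)$, combine $\conv(S_g)\subseteq\mathcal{L}_{g,k}\subseteq\ov{\mathcal{L}_{g,k}}$ (the first inclusion because evaluations at points of $S_g$ are admissible functionals and $\mathcal{L}_{g,k}$ is convex) with $\ov{\mathcal{L}_{g,k}}=C_k^{\vee}=C^{\vee}=\conv(S_g)$ to squeeze. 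You should also verify, for the closure version, that every affine-linear $\ell\geq 0$ on $\mathcal{L}_{g,k}$ lies in $C_k$; here one uses that functionals $L\in M_g[k]^\vee$ with $L(1)=0$ arise as limits of normalised ones, plus closedness of $M_g[k]$.
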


\begin{comi3}
So the basic strategy to see whether the approximation becomes exact is to verify that $(ii)$ is fulfilled instead of checking $(i)$. One cannot apply the main theorem directly because in (ii) one has no control about the minimality of the polynomials. However in the case that all $g_i$ have negative definite Hessian on $S_g$ (this is a strong assumption but a rather natural one) one can argue in the following way which was done by Helton and Nie in [HN] and in a weaker form by Lasserre earlier in [Las] (improvements of the following strategy work in more general cases): \\[0.2cm] 
It is possible to use the Karush-Kuhn-Tucker result from optimization in order to get Lagrange multipliers $\lambda_0,...,\lambda_m \geq 0$ and $u \in S_g$ depending on $\ell \in \R[\ov X]_1$ with $\ell(S_g) \subseteq \R_{\geq 0}$ which satisfy the following equality:
\begin{align*}
\ell=\lambda_0 + \sum_i^m \lambda_i g_i + \sum_i^m \lambda_i (\ov{X}-u)^T \underbrace{\int_0^1 \int_0^t (-\Hess g_i)(u+s(\ov{X}-u)) \ ds \ dt}_{=:H_{i,\ell}} (\ov{X}-u)
\end{align*}
Now after making some basic calculations one is able to apply the main theorem and conclude that there is a uniform $N \in \N$ guaranteeing $H_{i,\ell} \subseteq M_{g,\R[ \ov X]^{n \times n}}[N]$. From the upper equation it follows
\begin{align*}
\ M_g[N+2] \cap \R[\ov X]_1 = \{ \ell \in \R[\ov X]_1 \ | \ \ell(S_g) \subseteq \R_{\geq 0} \}
\end{align*}
so $(ii)$ of \Cref{lemmi} is fulfilled. \\[0.2cm] 
For more information about the Lasserre relaxation we refer the reader to [Las] and to [HN] for the details of the results about the exactness of the Lasserre relaxation where the main theorem is used.
\end{comi3}

\section{Proof of the theorem}

\begin{lemma} \label{u1}
Let $k \in \N$, $R=\R[\ov X]^{k \times k}$, $g_1,...,g_m \in \R[\ov X]$ be polynomials defining the quadratic module $M_{g,R}$, $H$ a finite-dimensional subspace of $R$ and $U \subseteq M_{g,R} \cap H$ a compact set in $H$ consisting of inner points of $M_{g,R} \cap H$ in $H$ (i.e. $U \subseteq \inte(M_{g,R} \cap H)$ in $H$). Then there is $N \in \N$ with $U \subseteq M_{g,R}[N]$.
\end{lemma}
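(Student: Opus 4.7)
The plan is to turn the hypothesis that every $A \in U$ is an interior point of $M_{g,R} \cap H$ into a \emph{local} degree bound and then use compactness of $U$ to globalise it. The crucial ingredient is that each truncated module $M_{g,R}[N]$ is a convex cone: it is visibly closed under addition of two representations (sum of SOS is SOS) and under multiplication by a scalar $\la \geq 0$ (write $\la S_i = (\sqrt{\la} C)^T(\sqrt{\la} C)$ if $S_i = C^T C$), and neither operation increases the prescribed degrees.

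For a fixed $A \in U$, I would choose a basis $e_1, \ldots, e_d$ of $H$ and, using $A \in \inte(M_{g,R} \cap H)$, pick $\ep > 0$ small enough that all $2d$ points $A \pm \ep e_i$ still lie in $M_{g,R} \cap H$. Each of these finitely many points (including $A$ itself) admits \emph{some} representation in $M_{g,R}$, so by taking the maximum of the finitely many degrees that occur one obtains $N_A \in \N$ with $A, A \pm \ep e_i \in M_{g,R}[N_A]$.

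Next I would set $W_A := \bigl\{ A + \sum_{i=1}^d t_i e_i \bigm| \sum_i |t_i| < \ep \bigr\}$, which is an open neighbourhood of $A$ in $H$. Every $B = A + \sum t_i e_i \in W_A$ is a convex combination
\begin{align*}
B = \Bigl(1 - \tfrac{1}{\ep}\sum_i |t_i|\Bigr) A + \sum_{t_i > 0} \tfrac{t_i}{\ep}(A + \ep e_i) + \sum_{t_i < 0} \tfrac{-t_i}{\ep}(A - \ep e_i),
\end{align*}
whose central coefficient $1 - \tfrac{1}{\ep}\sum_i |t_i|$ is non-negative precisely because $\sum_i |t_i| < \ep$. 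Since $M_{g,R}[N_A]$ is a convex cone containing each of the $2d+1$ summands, it follows that $W_A \subseteq M_{g,R}[N_A]$.

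Finally, $\{W_A : A \in U\}$ is an open cover of the compact set $U$; extracting a finite subcover $W_{A_1}, \ldots, W_{A_s}$ and setting $N := \max_j N_{A_j}$ gives $U \subseteq M_{g,R}[N]$. The only points that need to be verified carefully are the convex cone property of $M_{g,R}[N]$ and the non-negativity of the central coefficient in the convex combination above; beyond that the argument is a standard compactness sweep, so I do not foresee any genuine obstacle.
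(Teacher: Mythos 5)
Your proposal is correct and follows essentially the same route as the paper's proof: both pick a basis of $H$, use the interior-point hypothesis to find $\ep > 0$ with $A \pm \ep e_i \in M_{g,R}$, promote the finite set to a common truncation $M_{g,R}[N_A]$, and invoke convexity of $M_{g,R}[N_A]$ to obtain a neighbourhood of $A$ inside it, then finish with compactness. The only superficial difference is that you write out the convex combination explicitly rather than simply observing that $\conv(\{A \pm \ep e_i\})$ is a neighbourhood of $A$.
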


\begin{proof} 
We work in the topology of $H$. Because of the compactness of $U$, one only has to prove $U \subseteq \bigcup_{N \in \N} \inte(M_{g,R}[N] \cap H)$. So take $f \in U$. Choose a basis $v_1,...,v_k$ of $H$. As $f$ is an inner point of $M_{g,R} \cap H$ in $H$, there is $\ep > 0$ such that 
\begin{align*}
G_f:= \{ f + \ep v_1, ... , f + \ep v_k, f - \ep v_1, ... , f - \ep v_k\} \subseteq M_{g,R}.
\end{align*}
Because $G_f$ is finite, we can find $N \in \N$ with $G_f \subseteq M_{g,R}[N]$. Due to the convexity of $M_{g,R}[N]$ it also is true that $\conv(G_f) \subseteq M_{g,R}[N]$. Now $\conv(G_f)$ is a neighbourhood of $f$ in $H$, hence $f \in \inte(M_{g,R}[N] \cap H)$.  
\end{proof}

\begin{cora} \label{putinarmatrixbound} (Putinar's Positivstellensatz for matrix polynomials with degree bounds) Let $k \in \N$ and $g_1,...,g_m \in \R[\ov X]$ be polynomials defining an Archimedean quadratic module $M_{g,\R[\ov X]}$. Then for all $L \in \N$ there exists $N \in \N$ guaranteeing that for every $A \in S\R[\ov X]_L^{k \times k}$ satisfying $||A|| \leq L$ and $A \succeq \frac{1}{L}$ on $S_g$ already $A \in M_{g,\R[\ov X]^{k \times k}}[N]$ holds.
\end{cora}

\begin{proof}
Define $H=\R[\ov X]_L^{k \times k}$, $R=\R[\ov X]^{k \times k}$ as well as
\begin{align*}
U=\left\{ A \in \R[\ov X]_L^{k \times k} \ \middle| \ ||A|| \leq L, A \succeq \frac{1}{L} \text{ on } S_g \right\}
\end{align*}
and apply \Cref{u1} with these data as an input. Putinar's Positivstellensatz (without degree bounds) \Cref{putinarmatrix} guarantees that $U \subseteq \inte(M_{g,R} \cap H)$ in $H$. Of course $U$ is compact (use that the map sending a symmetric matrix to its smallest eigenvalue w.r.t the modulus is continuous) so all the preliminaries of the \Cref{u1} are fulfilled.
\end{proof}

\begin{cor} \label{putinarbound} (Putinar's Positivstellensatz with degree bounds)
Let $g_1,...,g_m \in \R[\ov X]$ be polynomials defining an Archimedean quadratic module $M_g$. Then for all $L \in \N$ there exists some $N \in \N$ guaranteeing that for every $f \in \R[\ov X]_L$ satisfying $||f|| \leq L$ and $f \geq \frac{1}{L}$ on $S_g$ already $f \in M_g[N]$ holds.
\end{cor}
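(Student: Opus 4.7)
The plan is to deduce this statement directly as the special case $k=1$ of the matrix-valued \Cref{putinarmatrixbound} that has just been established. First I would set up the obvious identification $\R[\ov X]^{1\times 1} \cong \R[\ov X]$ and verify that all the data translate correctly: a symmetric $1\times 1$ matrix polynomial $(f)$ is positive semidefinite at a point $x$ exactly when $f(x) \geq 0$, so the condition $A \succeq \tfrac{1}{L}$ on $S_g$ becomes $f \geq \tfrac{1}{L}$ on $S_g$; the norm on $\R[\ov X]^{1\times 1}$ obtained by taking the maximum of the degrees of entries reduces to the scalar polynomial norm; and a matrix sum of squares $C^T C$ with $C \in \R[\ov X]^{h \times 1}$ equals $c_1^2 + \cdots + c_h^2$ for $C = (c_1, \ldots, c_h)^T$. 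Hence $\text{SOS}(\R[\ov X]^{1\times 1})$ coincides with the usual cone of sums of squares $\sum \R[\ov X]^2$, and consequently $M_{g,\R[\ov X]^{1\times 1}}[N] = M_g[N]$ for every $N \in \N$.

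Given $L \in \N$, I would then invoke \Cref{putinarmatrixbound} with $k=1$ and this $L$ to obtain some $N \in \N$ such that every $A \in S\R[\ov X]_L^{1\times 1}$ with $\|A\| \leq L$ and $A \succeq \tfrac{1}{L}$ on $S_g$ already lies in $M_{g,\R[\ov X]^{1\times 1}}[N]$. Translating this statement back through the identification above yields exactly the claimed conclusion for scalar polynomials. Since the argument is a one-line reduction, there is no real obstacle; the only thing that requires a moment of care is checking that the definitions in \Cref{genf} and \Cref{quadratic} indeed restrict correctly to $k=1$, which is immediate.
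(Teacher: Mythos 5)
Your proposal is correct and matches the paper's proof exactly: both simply apply \Cref{putinarmatrixbound} with $k=1$. The extra verification you give that the $1\times 1$ matrix definitions collapse to the scalar ones is sound, though the paper leaves it implicit.
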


\begin{proof} 
Apply \Cref{putinarmatrixbound} with $k=1$.
\end{proof}

\begin{cor} \label{schmudgenmatrixbound} (Schmüdgen's matrix Positivstellensatz with degree bounds) Let $k \in \N$ and $g_1,...,g_m \in \R[\ov X]$ be polynomials defining a compact set $S_g$. Then for al $L \in \N$ there exists $N \in \N$ guaranteeing that for every $A \in S\R[\ov X]_L^{k \times k}$ satisfying $||A|| \leq L$ and $A \succeq \frac{1}{L}$ on $S_g$ already $A \in T_{g,\R[\ov X]^{k \times k}}[N]$ holds.
\end{cor}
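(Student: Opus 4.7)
The plan is to reduce \Cref{schmudgenmatrixbound} to the already proved main theorem \Cref{putinarmatrixbound} by replacing the original tuple $g=(g_1,\dots,g_m)$ with the enlarged tuple of all its products
\[
h=(g^{\al})_{\al \in \{0,1\}^m},
\]
so that the \emph{preordering} generated by $g$ becomes a \emph{quadratic module} generated by $h$. By the identity recorded in \Cref{quadratic}, one has
\[
M_{h,\R[\ov X]}=T_g \qquad \text{and} \qquad M_{h,R}[N]=T_{g,R}[N]
\]
for every $N$, where $R=\R[\ov X]^{k\times k}$. Moreover $S_h=S_g$, since the components of $h$ include the $g_i$ themselves (as $\alpha$ ranges over the standard basis vectors), and products of nonnegative numbers are nonnegative.

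Next I need $M_{h,\R[\ov X]}$ to be Archimedean in order to invoke \Cref{putinarmatrixbound}. But by the Archimedean characterization underlying Schm\"udgen's Positivstellensatz (the hard half being Schm\"udgen's theorem itself, as mentioned right after \Cref{schmudgenmatrix}), the preordering $T_g$ is Archimedean whenever $S_g$ is compact. Thus $M_{h,\R[\ov X]}=T_g$ is Archimedean.

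Now \Cref{putinarmatrixbound} applied to the tuple $h$ with the given $L\in\N$ produces an $N\in\N$ such that every $A\in S\R[\ov X]_L^{k\times k}$ with $\|A\|\le L$ and $A\succeq \tfrac{1}{L}$ on $S_h=S_g$ lies in $M_{h,R}[N]=T_{g,R}[N]$, which is exactly the claim.

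Nothing in the argument looks like a real obstacle: the only technical content borrowed from outside the paper is Schm\"udgen's theorem (already used implicitly in the paper's historical outline), which guarantees the Archimedean property that \Cref{putinarmatrixbound} requires as its hypothesis. Everything else is purely definitional repackaging of the preordering as a quadratic module on the product tuple.
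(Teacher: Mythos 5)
Your proposal is correct and follows precisely the second route the paper itself indicates (``Alternatively one can use that $T_g$ is Archimedean and apply \Cref{putinarmatrixbound} directly''), merely spelling out the definitional repackaging $T_g = M_{(g^\al)_{\al}}$ and the equalities $S_h = S_g$, $T_{g,R}[N]=M_{h,R}[N]$ that the paper leaves implicit. The only external input is Schm\"udgen's Archimedean characterization, which the paper also explicitly sanctions for this step.
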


\begin{proof}
The proof runs analogously to the proof of \Cref{putinarmatrixbound}. Alternatively one can use that $T_g$ is Archimedean and apply \Cref{putinarmatrixbound} directly.
\end{proof}

\begin{cor} \label{schmudgenbound} (Schmüdgen's Positivstellensatz with degree bounds)
Let $g_1,...,g_m \in \R[\ov X]$ be polynomials defining a compact set $S_g$. For all $L \in \N$ there exists some $N \in \N$ guaranteeing that for every $f \in \R[\ov X]_L$ satisfying $||f|| \leq L$ and $f \geq \frac{1}{L}$ on $S_g$ already $f \in T_g[N]$ holds.
\end{cor}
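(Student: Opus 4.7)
The plan is to deduce this scalar result directly from one of the already established statements; there are two essentially trivial routes, and I would take the shorter of the two.

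The first option is to apply \Cref{schmudgenmatrixbound} with $k=1$. In the $1\times 1$ case a symmetric matrix polynomial is just a scalar polynomial, the operator norm coincides with the usual norm on $\R[\ov X]_L$, the positive semidefiniteness condition $A \succeq \frac{1}{L}$ on $S_g$ becomes $f \geq \frac{1}{L}$ on $S_g$, and membership in $T_{g,\R[\ov X]^{1\times 1}}[N]$ coincides with membership in $T_g[N]$ as defined for scalars. Thus the statement follows immediately; no new work is needed.

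The second option, mentioned already in the proof of \Cref{schmudgenmatrixbound}, is to observe that compactness of $S_g$ makes the preordering $T_g$ Archimedean (this is Schmüdgen's classical result, recorded in the remark after \Cref{schmudgenmatrix}). Writing $T_g = M_{(g^\alpha \mid \alpha \in \{0,1\}^m)}$ as in \Cref{quadratic}, one can then invoke \Cref{putinarbound} with the enlarged tuple of generators $(g^\alpha)_{\alpha\in\{0,1\}^m}$. The hypotheses on $f$ translate verbatim, and the conclusion $f \in M_{(g^\alpha)}[N] = T_g[N]$ is exactly what is claimed.

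Since neither approach involves any real obstacle, the only thing worth checking carefully is the bookkeeping of the truncation degrees in the second route: one must confirm that the degree bound $N$ produced by \Cref{putinarbound} applied to the generators $g^\alpha$ coincides with the bound $N$ appearing in the definition of $T_g[N]$, which it does since both demand $\deg(s_\alpha g^\alpha) \leq N$ for each $\alpha$. Given this observation, I would simply write \emph{Apply \Cref{schmudgenmatrixbound} with $k=1$.} and be done.
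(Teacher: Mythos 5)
Your primary route --- apply \Cref{schmudgenmatrixbound} with $k=1$ --- is exactly the paper's proof, and your justification of why the $1\times 1$ specialization gives the scalar statement is correct. The alternative route via the Archimedean property of $T_g$ and \Cref{putinarbound} is also sound, but since you settle on the first, no further comparison is needed.
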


\begin{proof} 
Apply \Cref{schmudgenmatrixbound} with $k=1$.
\end{proof}

\section{references}

\begin{small}
\begin{longtable}{p{1cm} p{12.8cm}}
$[$GM$]$ & \textbf{B. Gärtner, J. Matousek}: {\it Approximation Algorithms and Semidefinite Programming}, Springer, 2012 \\
$[$He$]$ & \textbf{R. Heß}: {\it Die Sätze von Putinar und Schmüdgen für Matrixpolynome mit Gradschranken}, unpublished diploma thesis (in german), University of Konstanz, supervised by M. Schweighofer, 2013. \\ 
$[$HN$]$ & \textbf{J.W. Helton, J. Nie}: {\it Semidefinite representation of convex sets },
Math. Program., 122 (1), p. 21-64, 2010. \\
$[$HS$]$ & \textbf{C.W.J. Hol, C.W. Scherer}: {\it Matrix sum-of-squares relaxations for robust semi-definite
programs}, Math. Program. 107, no. 1-2, Ser. B, p.189-211, 2006. \\
$[$Lau$]$ & \textbf{M. Laurent}: {\it Sums of squares, moment matrices and optimization over polynomials}, Emerging Applications of Algebraic Geometry, Vol. 149 of IMA Volumes in Mathematics and its Applications, Springer, p. 157-270, 2009. \\
$[$Las$]$ & \textbf{J.B. Lasserre}: {\it Convex sets with semidefinite representation}, \newline Math. Program., 120, p.457-477, 2009. \\
$[$Ma$]$ & \textbf{M. Marshall}: {\it Positive polynomials and sums of squares}, \newline American Mathematical Society, 2008. \\
$[$NPS$]$ & \textbf{T. Netzer, D. Plaumann, M. Schweighofer}: {\it Exposed faces of semidefinitely representable sets},
SIAM J. Optimization, 20(4), p.1944-1955, 2010. \\
$[$NS$]$ & \textbf{J. Nie, M. Schweighofer}: {\it On the complexity of Putinar's Positivstellensatz}, Journal of Complexity 23, No 1, p. 135-150, 2007. \\
$[$Pr$]$ & \textbf{A. Prestel}: {\it Bounds for Representations of Polynomials Positive on Compact Semi-Algebraic Sets}, in F.-V. Kuhlmann, S. Kuhlmann, M. Marshall (editors): Valuation Theory and its Applications I, Valuation Theory and its Applications I, 2001. \\
$[$Pu$]$ & \textbf{M. Putinar}: {\it Positive polynomials on compact semi-algebraic sets}, Indiana University Math. Journal 42, No 3, p. 969-984, 1993 \\
$[$Sm$]$ & \textbf{K. Schmüdgen}: {\it The K-moment problem for compact semi-algebraic sets}, Math. Ann. 289, No 2, p. 203-206, 1991 \\
$[$Sw$]$ & \textbf{M. Schweighofer}: {\it On the complexity of Schmüdgen's Positivstellensatz}, Journal of Complexity 20, No 4, p. 529-543, 2004.
\end{longtable}
\end{small}

\end{document}